\newtheorem{theorem}{Theorem}[section]
\newtheorem{lemma}[theorem]{Lemma}
\theoremstyle{definition}
\newtheorem{definition}[theorem]{Definition}
\newtheorem{claim}[theorem]{Claim}
\newtheorem{question}[theorem]{Question}
\newtheorem{conjecture}[theorem]{Conjecture}
\newtheorem{remark}[theorem]{Remark}
\newtheorem{assertion}[theorem]{Assertion}
\newcommand{\mC}{{\mathbb C}}
\newcommand{\mG}{\mathbb G}
\newcommand{\mP}{\mathbb P}
\newcommand{\mA}{\mathbb A}
\newcommand{\bO}{\Omega}
\newcommand{\ep}{\epsilon}
\newcommand{\D}{\Delta}
\newcommand{\kk}{\kappa}
\newcommand{\mcF}{\mathcal F}
\newcommand{\mcN}{\mathcal N}
\newcommand{\mcS}{\mathcal S}
\newcommand{\fg}{\mathfrak g}
\newcommand{\ti}{\tilde}
\newcommand{\un}{\underline}
 \newcommand\supp{\mathrm{supp}}
\newcommand{\sm}{\setminus}
\newcommand{\sms}{\smallskip}
\title[A question about the Fourier transform]{ A question about the Fourier transform}
\author{ David Kazhdan}
\begin{document}
\maketitle

\begin{abstract}
This  paper formulates a conjectural description of 
of the space tres cuspidale functions 
 studied in \cite{W} (called 
weightless functions in \cite{BK}) and raises a question about a possibility of extending such a 
description in a more general context.
\end{abstract}

\section{}

Let $F$ be a local or a finite field. For an $F$ variety $\un X$ we write $X:= \un X (F)$ 
denote by 
$\mcS (X)$ 
the Schwartz space of complex valued functions on $X$. \begin{remark}If $F$ is non-archimedian,
  then $\mcS (X)$ is the space of locally constant
   compactly supported functions on $X(F)$ 
  and if $F$ is archimedian the space 
$\mcS (X)$ is defined in \cite{AG}.

\sms

We fix a non-trivial additive character
$\psi :F\to \mC ^\ast$ and 
 for a $F$-vector space  denote by $\mcF :\mcS (V  ^\vee)\to \mcS (V)$ the Fourier transform, where $ V ^\vee $ is the dual vector space.

\end{remark} \begin{definition}
\begin{enumerate}
\item  $\un G$ is  a split semisimple group and  $G= \un G (F)$.
\item  $\fg$  is the Lie algebra of
$\un G$.

\item 
$\mathcal U$ is the variety  of
   unipotent radicals of proper parabolic subgroups of $\un G$ \footnote { $\mathcal U$ is simultoneously the set of
   unipotent radicals of proper parabolic subalgebras of $\fg$.}.

\item For  a conjugacy class $\un  \bO \subset \un G$ 
we denote by $\un  Y^\bO \subset \un G\times \mathcal U $ 
the subvariety  of pairs $ (g,U) $ such that
 $ g U\subset \bar {\un  \bO }$ where $ \un {\bar \bO} $ is 
the closure of $\un  \bO$.

\item For  a conjugacy class $\bO \subset \fg$ we denote by $ Y^\bO \subset \fg \times \mathcal U $ the subset of pairs $ (x,N) $ such that $ x+N\subset \bar \bO $.  \end{enumerate}
,
\end{definition}

\section{} In this section $F$  is a finite field.

\begin{definition}
\begin{enumerate}
\item $\mcS ( G)_{cusp}\subset \mcS (G) $ is the subspace of cuspidal  functions, that is of functions $f$ on $G$ such that $\sum_{u \in U}f(gu)=0$ for all $g\in G,u\in \mathcal U $.

\item    $\mcS _{w}(\bar  \bO)\subset \mcS (\bar  \bO) $ is the subspace of functions
  $f$ such that $\sum_{u\in U}f(gu)=0$ for all $g\in G,u\in \mathcal U $. 

\item  $\mcS (\bar  \bO) _{cusp} \subset \mcS (\bar  \bO) $
  is the space of restrictions of cuspidal functions onto $\bar  \bO$.
  \end{enumerate}

\end{definition}

It is clear that $\mcS _{cusp} ( \bar \bO)\subset \mcS _{w} ( \bar \bO) $.

\begin{conjecture}\label{1} $\mcS _{cusp} (\bar  \bO) = \mcS _{w} (\bar  \bO) $ for any conjugacy class $\un  \bO \subset  \mG$.

\end{conjecture}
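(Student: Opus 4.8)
\emph{A proposed strategy.}
The inclusion $\mcS_{cusp}(\bar\bO)\subseteq\mcS_w(\bar\bO)$ being clear, only the reverse one is at stake. I would first record the spectral meaning of cuspidality over a finite field. In the Peter--Weyl decomposition $\mcS(G)=\bigoplus_\pi\pi\otimes\pi^\ast$, right convolution by $\mathbf 1_U$ is $|U|$ times the orthogonal projector onto right $U$-invariants; since $\sum_{U\in\mcU}\pi^U$ is a $G$-submodule of the irreducible $\pi$, it is $0$ exactly when $\pi$ is cuspidal. Hence $\mcS(G)_{cusp}=\bigoplus_{\pi\text{ cuspidal}}\pi\otimes\pi^\ast$, and its orthogonal complement $\mcS(G)_{ncusp}=\bigoplus_{\pi\text{ not cuspidal}}\pi\otimes\pi^\ast$ is exactly the two-sided ideal of $\CC[G]$ generated by $\{\mathbf 1_U\}_{U\in\mcU}$, i.e.\ $\mcS(G)_{ncusp}=\mathrm{span}\{\,\sum_{u\in U}\delta_{gu}:g\in G,\ U\in\mcU\,\}$.

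Dualizing the conjectured identity is clarifying. With the natural pairings on $\mcS(G)$ and $\mcS(\bar\bO)$, and with $\iota\colon\mcS(\bar\bO)\hookrightarrow\mcS(G)$ the extension by zero, one computes $\mcS_{cusp}(\bar\bO)^{\perp}=\{f\in\mcS(\bar\bO):\iota(f)\in\mcS(G)_{ncusp}\}$ and $\mcS_w(\bar\bO)^{\perp}=\mathrm{span}\{\,\sum_{u\in U}\delta_{gu}:(g,U)\in Y^{\bO}(F)\,\}$, the last functions being supported on $\bar\bO$ precisely because $gU\subseteq\bar\bO$ for $(g,U)\in Y^{\bO}$. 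The clear inclusion says $\mcS_w(\bar\bO)^{\perp}\subseteq\mcS_{cusp}(\bar\bO)^{\perp}$, which indeed holds since $\sum_{u\in U}\delta_{gu}=\delta_g\ast\mathbf 1_U\in\mcS(G)_{ncusp}$. Conjecture \ref{1} is then equivalent to the reverse inclusion: \emph{every function supported on $\bar\bO$ whose extension by zero is purely non-cuspidal is already a linear combination of the ``admissible'' generators $\sum_{u\in U}\delta_{gu}$ with $gU\subseteq\bar\bO$.} The content is that a combination $\sum_jc_j\!\sum_{u\in U_j}\delta_{g_ju}$ can be supported on $\bar\bO$ through cancellation outside $\bar\bO$ with no individual $g_jU_j$ contained in $\bar\bO$, and one must re-express such a combination through admissible generators only.

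For the argument itself I would prove the statement, by induction on dimension, for every closed conjugation-invariant $Z\subseteq G$ (not only orbit closures). Choose an orbit $\bO\subseteq Z$ of maximal dimension; then $\partial:=Z\setminus\bO$ is closed and invariant, and for $f\in\mcS_w(Z)$ the restriction $f|_\partial$ lies in $\mcS_w(\partial)$, since any $(g,U)$ with $gU\subseteq\partial$ also satisfies $gU\subseteq Z$. By the inductive hypothesis $f|_\partial$ is the restriction of a cuspidal function $F_\partial$; subtracting $F_\partial|_Z$ (which keeps us inside $\mcS_w(Z)$) we may assume $\supp f\subseteq\bO$. So the whole problem reduces to: \emph{if $f$ is supported on a single orbit $\bO$ and $\sum_{u\in U}f(gu)=0$ whenever $gU\subseteq Z$, then $\iota(f)\in\mcS(G)_{cusp}+\mcS(G\setminus Z)$.} This is the real obstacle, and the plan is to handle it by transporting the group-side incidence variety to the Lie-algebra side $Y^{\bO}\subseteq\fg\times\mcU$ via a quasi-logarithm near semisimple parts, thereby reducing to nilpotent $\bO$, and then exploiting the Fourier transform $\mcF$ on $\mcS(\fg)$: $\mcF$ turns the averaging operators cutting out cuspidal functions (averaging over a nilradical $\fn$) into support conditions (on the parabolic subalgebra $\fn^{\perp}$), so it should convert the weightlessness constraints on $\bar\bO$ into conditions that visibly describe a space of restrictions of cuspidal functions. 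Carrying this through seems to require genuine geometric input --- the (generalized) Springer correspondence and Lusztig's description of conjugacy-class closures and their boundary behaviour --- and that, I expect, is precisely where the difficulty lies and why the statement is offered only as a conjecture.
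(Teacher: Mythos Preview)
The paper does not prove this statement; it is recorded as a conjecture, and the only case actually treated is the Lie–algebra analogue (Conjecture~\ref{Lie}) for $G=PGL(2)$. So there is no ``paper's own proof'' to compare your attempt against. Your proposal is candid about this: the dual reformulation and the inductive reduction over closed invariant subsets $Z$ are correct, and you then isolate the residual problem --- a function supported on a single open orbit $\bO\subset Z$ and satisfying the $\mcS_w(Z)$ constraints must lie in $\mcS_{cusp}(Z)$ --- as the genuine obstacle, without resolving it. That is an accurate diagnosis, not a proof.

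It is worth contrasting your outline with what the paper does in the $PGL(2)$ case (Sections~5--7), since the methods are quite different. Rather than dualize and induct, the paper uses the multiplicity-one observation $\dim\mathrm{Hom}_G(\mcS(\bO),\pi)\le 1$ for $\pi\neq St$, filters $\mcS(\mcN)$ through the maps $\kappa_\chi$ to principal series, and then \emph{constructs explicitly} elements $h_\chi\in\mcS(\fg^\vee_{ell})$ whose Fourier transforms hit each $V_\chi$ non-trivially (Lemma~\ref{chi}). This is a hands-on rank-one computation with no obvious inductive structure, which is presumably why the general statement is left as a conjecture. Your suggestion to pass to $\fg$ and use the Fourier transform is exactly the move behind Conjecture~\ref{Lie}, and your expectation that Springer-type geometric input is needed is consistent with the paper's Section~8, where the conjecture is situated inside a broader and still open question about Fourier transforms.

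In short: your orthogonality computation $\mcS_w(\bar\bO)^\perp=\mathrm{span}\{\sum_{u\in U}\delta_{gu}:(g,U)\in Y^\bO\}\subseteq\mcS_{cusp}(\bar\bO)^\perp=\iota^{-1}(\mcS(G)_{ncusp})$ is correct, and the induction on $\dim Z$ is a legitimate reduction; but the step you yourself flag as the ``real obstacle'' carries essentially the full content of the conjecture, and neither you nor the paper supplies it.
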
 
\section{} In this section $F$  is a  local non-archimedian field.

\begin{definition}
\begin{enumerate}

\item $\mcS (G)_{cusp}\subset \mcS (G) $ is the subspace of cuspidal functions, that is of functions $f\in \mcS (G) $ such that
  $\int _{u \in U}f(gu)du =0$ for $(g,U)\in Y$.

\item   Let $\bO \subset G$ be a conjugacy class and $\bar \bO$ be the closure of $\bO$. Then
$\mcS _{w}(\bar \bO)\subset \mcS (\bar  \bO) $ is the subspace of functions
  $ f $ such that $\int _{u\in U}f(gu)du=0$ for all $(g,U) \in Y^\bO$. 

\item  $\mcS ( \bar \bO) _{cusp}$ is the space of restrictions of cuspidal functions
  onto $\bar \bO$. \end{enumerate}

\end{definition}

It is clear that $\mcS _{cusp} (\bar \bO)\subset \mcS _{w} (\bar \bO) $.

\begin{conjecture}\label{1} $\mcS _{cusp} (\bar \bO) = \mcS _{w} (\bar \bO) $ for any conjugacy class $\bO \subset G$.

\end{conjecture}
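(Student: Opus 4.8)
The plan is to reduce the local non-archimedean statement to the finite-field Conjecture by a combination of ``reduction mod $p$'' / compactification arguments and local harmonic analysis. The inclusion $\mcS_{cusp}(\bar\bO)\subset\mcS_w(\bar\bO)$ being automatic, the content is the reverse inclusion: given $f\in\mcS(\bar\bO)$ all of whose integrals $\int_{u\in U}f(gu)\,du$ vanish for $(g,U)\in Y^\bO$, one must produce a genuinely cuspidal $\tilde f\in\mcS(G)_{cusp}$ with $\tilde f|_{\bar\bO}=f$. First I would set up the relevant geometry: the variety $\un Y^\bO\subset \un G\times\mcU$ is proper over $\un G$ (the fibers sit inside the projective variety $\mcU$ of unipotent radicals of proper parabolics), so the ``parabolic averaging'' operators $f\mapsto \bigl(g\mapsto\int_U f(gu)\,du\bigr)$ assemble into a single continuous map $\mcS(\bar\bO)\to\mcS(\un Y^\bO(F))$ whose kernel is exactly $\mcS_w(\bar\bO)$; likewise the averaging maps on all of $G$ have kernel $\mcS(G)_{cusp}$. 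The problem is thus to show that the restriction map $\mcS(G)_{cusp}\to\mcS_w(\bar\bO)$ is surjective, i.e.\ that the short exact sequences of averaging complexes are compatible under restriction to $\bar\bO$.

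The key steps, in order, would be: (1) Establish an extension result for the \emph{ambient} pair, namely that every $f\in\mcS(\bar\bO)$ extends to $\mcS(G)$ compatibly with the averaging maps modulo a correction supported away from $\bar\bO$; this is where one uses that $\bar\bO$ is closed in $G$ and that $\mcS$ is an exact functor on the relevant stratification, so $\mcS(G)\twoheadrightarrow\mcS(\bar\bO)$. (2) Analyze the ``correction term'': an arbitrary extension $F_0$ of $f$ need not be cuspidal, but by Step (1) its averages $\int_U F_0(gu)\,du$ vanish for $(g,U)$ lying over $\bar\bO$, so the obstruction is a section of the averaging sheaf supported on the open complement $G\sm\bar\bO$ — concretely on the union of the other strata (other conjugacy classes and their closures). (3) Kill the obstruction by a descending induction on the closure order of conjugacy classes (or nilpotent orbits, after passing to the Lie algebra via the exponential/Cayley map near a suitable compact subgroup), using the inductive hypothesis that $\mcS_{cusp}(\bar\bO') = \mcS_w(\bar\bO')$ for all orbits $\bO'$ strictly smaller than the strata appearing in the support — this lets one subtract off cuspidal functions matching the obstruction stratum by stratum. (4) Bookkeeping with the Fourier transform $\mcF$ and the character $\psi$: on the Lie algebra side, cuspidality is interchanged by $\mcF$ with a support condition on the nilpotent cone, and one uses Harish-Chandra/Howe finiteness and the local integrability of orbital integrals to guarantee the constructed extension lies in $\mcS(G)$ rather than merely in a completion.

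The main obstacle, as I see it, is Step (3) — the inductive descent through the stratification. The geometry of orbit closures in $G$ (or nilpotent orbits in $\fg$) is intricate: closures of conjugacy classes are not smooth, the induced maps on Schwartz spaces along non-transverse strata need not be surjective on the nose, and one must control how the varieties $Y^{\bO'}$ for different $\bO'$ fit together inside $G\times\mcU$. A clean way to organize this would be to prove the stronger ``sheaf-level'' statement that the complex $[\mcS(G)\to\bigoplus \mcS(Y_i)]$ computing cuspidal functions restricts, stratum by stratum, to the analogous complex for each $\bar\bO$ — essentially an exactness/base-change statement for the parabolic-averaging functor along the stratification by conjugacy classes. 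I expect that for the finite-field case (the preceding Conjecture) the same scheme works with sums replacing integrals and with no analytic subtleties, so that the local statement should ultimately be \emph{deduced} from the finite-field statement by a close-field or Deligne--Lusztig-type comparison; making that comparison precise, and in particular transporting the equality of the two subspaces (not just their dimensions) across the comparison, is the part I would expect to require the most new input.
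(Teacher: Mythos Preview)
The statement you are attempting is left as an open conjecture in the paper; there is no proof for general $G$ or general $\bO$. What the paper does prove is the Lie-algebra analogue (Conjecture~\ref{Lie}) in the single case $G=PGL(2,F)$, and the method is entirely different from your stratification/induction scheme. It rests on the explicit representation theory of $PGL(2)$: the assertion that $\dim\,\mathrm{Hom}_G(\mcS(\bO),\pi)\le 1$ for every irreducible $\pi\neq St$ lets one decompose $\mcS(\mcN)$ via the maps $\kk_\chi$ to principal series $V_\chi$, identify $\mcS_w$ with $\mcS_-=\ker(\kk_{\|\ \|^{-1/2}})$, and then exhibit by hand, for each $\chi\neq\|\ \|^{\pm 1/2}$, an explicit $h_\chi\in\mcS(\fg^\vee_{ell})$ whose Fourier transform has nonzero $\kk_\chi$-image. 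This forces $\mcS_{cusp}(\bar\bO)$ to meet every irreducible constituent of $\mcS_-$, hence to equal it. The argument is tied to multiplicity one and does not extend to higher rank; the paper does not claim otherwise.

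Your outline, aimed at the general conjecture, has genuine gaps beyond the one you flag. In Step~(3) the induction is oriented the wrong way: after extending $f$ to some $F_0\in\mcS(G)$, the obstruction to cuspidality lives on $G\sm\bar\bO$, i.e.\ on conjugacy classes that are \emph{not} in $\bar\bO$ and hence lie above $\bO$ or are incomparable to it in closure order, never strictly below. An inductive hypothesis on smaller orbits therefore gives no purchase, and even granting $\mcS_{cusp}(\bar{\bO'})=\mcS_w(\bar{\bO'})$ for all $\bO'$ one still has no mechanism to manufacture a cuspidal function vanishing on $\bar\bO$ that cancels the obstruction---cuspidal functions are not localized to complements of orbit closures. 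In Step~(4) you assert that $\mcF$ exchanges cuspidality with a support condition on the nilpotent cone; in the paper's setup it is the opposite: $f$ is cuspidal iff $\supp(\mcF(f))\subset\fg^\vee_{ell}$, the open elliptic regular locus, which is disjoint from the nilpotent cone. Finally, the close-fields/Deligne--Lusztig comparison you propose would at best transfer depth-zero or congruence-level information; nothing in the existing technology transports the full identity $\mcS_{cusp}=\mcS_w$ for an arbitrary $\bO$ from finite to local fields, and the paper makes no such claim.
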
  

\section{} This section is on a formulation of a Lie algebra analogue of Conjecture \ref{1} which also make sense 
for archimedian fields.

\begin{definition}
\begin{enumerate}

 \item $\fg  ^\vee _{ell}\subset \fg  ^\vee $ is the open subset of elements whose stabilizer in $G$
is an anisotropic  torus.

\item A function $f\in \mcS(\fg) $ is {\it cuspidal} if
$\supp (\mcF (f))\subset \fg  ^\vee_{ell} $ where $\mcF :\mcS (\fg)\to \mcS(\fg  ^\vee)$ is the Fourier transform.

\item For $f\in \mcS(\bar \bO),y=(x,N)\in Y^\bO $ we denote by 
$f_y: N\to \mC$ the function given by $f_y(n):= f(x+n)$.
\item  $\hat f_y \in \mcS (N^\vee)$ is the Fourier transform of the function $f_y$.
\item   $\mcS _{w}(\bar \bO)\subset \mcS (\bar \bO) $ is the subspace of functions $ f $ such that $ \hat f_y \in \mcS (N_y^\vee \sm 0)$ for all $ y=(x,N)\in Y^\bO $.

\item  $\mcS (\bar \bO) _{cusp} \subset \mcS (\bar \bO) $ is the space of restrictions onto $\bar \bO$ of cuspidal functions.

 \end{enumerate}

\end{definition}
\begin{remark} If  $F$ is either a non-archimedian or a finite field then   $\hat f_y  \in \mcS (N_y^\vee \sm 0)$ iff $ \int _{n\in N_y}f_y(n)dn=0 $.
\end{remark}

It is clear that $\mcS _{cusp} (\bar \bO)\subset \mcS _{w} (\bar \bO) $.

\begin{conjecture}\label{Lie} $\mcS _{cusp} (\bar  \bO) =
 \mcS _w( \bar \bO)$.

\end{conjecture}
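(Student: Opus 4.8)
The plan is to prove the two inclusions separately, the hard one being $\mcS_w(\bar\bO)\subseteq\mcS_{cusp}(\bar\bO)$. For the easy direction, recall it is already noted in the text; concretely, if $f$ is a cuspidal function on $\fg$, then $\supp(\mcF f)\subset\fg^\vee_{ell}$, and one checks that restriction to $\bar\bO$ together with further restriction to a slice $x+N$ produces, after Fourier transform in the $N$-direction, a distribution supported away from $0\in N^\vee_y$ — the point being that $0\in N^\vee_y$ corresponds to constants along $N$, i.e. to the $N$-invariant direction, which is precisely the non-elliptic (parabolic) direction excised from $\supp(\mcF f)$. I would make this precise by choosing a parabolic $P$ with unipotent radical $N$, writing $\fg^\vee\cong\fn^\vee\oplus(\fg/\fn)^\vee$ compatibly, and observing that the partial Fourier transform $\hat f_y$ vanishing only away from $0$ is implied by $\mcF f$ avoiding the subspace $(\fg/\fn)^\vee\subset\fg^\vee$ (the annihilator of $\fn$), which lies in the non-elliptic locus since its elements are fixed by $N\neq 1$.

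For the substantial inclusion, the strategy I would pursue is to realize $\mcS_w(\bar\bO)$ as the image of a suitable averaging/extension operator applied to functions whose Fourier transforms are supported on the elliptic locus. First I would reduce to the case where $\bO$ is a single orbit whose closure is understood via the Jordan/Lusztig stratification: $\bar\bO=\bigsqcup_{\bO'\le\bO}\bO'$. The defining condition on $f\in\mcS_w(\bar\bO)$ — that $\hat f_y$ vanish at $0$ for every $(x,N)\in Y^\bO$ — is a family of linear constraints indexed by pairs $(x,N)$ with $x+\fn\subset\bar\bO$; I would organize these by first fixing $N$ (equivalently a parabolic $P=MN$) and letting $x$ range, so that the constraint becomes: for each such $x$, $\int_N f(x+n)\,dn=0$ (in the $p$-adic/finite case) or its archimedean analogue. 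This says exactly that the ``constant term'' maps $f\mapsto\big(x\mapsto\int_N f(x+n)\,dn\big)$, suitably interpreted as a map from $\mcS(\bar\bO)$ into functions on the closure of the $M$-part, annihilate $f$ for every $N$.

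The key step — and the main obstacle — is to show that a function killed by all these constant-term maps is actually the restriction of a \emph{globally} cuspidal function on $\fg$, not merely ``cuspidal along $\bar\bO$''. The natural approach is a partition-of-unity / descent argument: one wants to build a cuspidal $\tilde f\in\mcS(\fg)$ restricting to $f$. I would attempt this by induction on the dimension (or on the partial order of orbits), extending $f$ off of successive strata. At each stage one must extend across a lower orbit $\bO'\subset\bar\bO$ while preserving the vanishing of all partial Fourier transforms $\hat{\tilde f}_y$; the obstruction to doing so is governed by the local structure of $\bar\bO$ near $\bO'$, i.e. by the transverse slice (Slodowy slice) to $\bO'$, and the point is that the cuspidality conditions on the slice are again of the same form, so the induction closes. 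In the archimedean case there is the extra analytic subtlety that extensions must be done within the Schwartz category of \cite{AG}, preserving rapid decay and the wavefront/support conditions; controlling the wavefront set of the extension under Fourier transform is where I expect the real work to lie, and I would lean on the characterization of $\mcS(\bar\bO)$ via \cite{AG} together with the fact that $\fg^\vee_{ell}$ is open to keep the support condition open and hence stable under small perturbations.

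Finally, I would remark that a cleaner route, if available, is to dualize: the statement $\mcS_{cusp}(\bar\bO)=\mcS_w(\bar\bO)$ is equivalent to an equality of the annihilators in the dual space, i.e. a statement about distributions on $\bar\bO$ that are orthogonal to all restrictions of cuspidal functions versus those ``generated'' by the parabolic averaging operators $f\mapsto\int_N f(\cdot\,u)$. Proving that every distribution killing $\mcS_w(\bar\bO)$ is a limit/combination of such averaging functionals is a Hahn–Banach-type closure statement; establishing the closedness of the relevant subspace (again automatic in the finite-field case, but requiring a topological argument over local fields) would be the remaining gap, and I would isolate it as the crux of a complete proof.
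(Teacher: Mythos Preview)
Your proposal is a strategy sketch, not a proof, and you yourself identify the gap: the inductive extension across lower strata (or equivalently the Hahn--Banach closure of the span of the constant-term functionals) is the whole problem, and nothing in the outline explains why it goes through. The induction you propose does not obviously close: the condition ``$\hat f_y\in\mcS(N^\vee\setminus 0)$ for all $(x,N)\in Y^{\bO}$'' constrains $f$ only along those affine $N$-cosets \emph{contained in} $\bar\bO$, whereas extending $f$ to a cuspidal $\tilde f$ on all of $\fg$ requires the vanishing of $\int_N\tilde f(x+n)\,dn$ for \emph{every} $x\in\fg$ and every nilradical $N$. There is no mechanism in your argument that produces these extra vanishing conditions from the ones you start with; the Slodowy-slice picture controls the local geometry of $\bar\bO$, not the global Fourier support of an extension to $\fg$.

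The paper does not prove the conjecture in general either --- it establishes only the case $G=PGL(2,F)$ for $F$ finite or non-archimedean local --- and by a completely different, representation-theoretic route. The key input is a multiplicity-one statement specific to $PGL(2)$: $\dim\operatorname{Hom}_G(\mcS(\bO),\pi)\le 1$ for every irreducible $\pi\ne St$. One then identifies $\mcS_w(\mcN)$ with the kernel $\mcS_-$ of the map $\kappa_{\|\cdot\|^{-1/2}}$ to a specific principal series, and shows that any $G$-stable subspace of $\mcS_-$ surjecting onto every other $V_\chi$ must be all of $\mcS_-$. The substance of the proof is then Lemma~\ref{chi}: an \emph{explicit construction}, for each $\chi$, of a function $h_\chi$ supported on $\fg^\vee_{ell}$ whose Fourier transform restricts to something with $\kappa_\chi$-image nonzero. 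This is done by taking (averages of) delta functions at elliptic points $ax$ and computing directly. None of this resembles an extension-across-strata argument, and the multiplicity-one input has no analogue for general $G$, which is presumably why the general statement remains a conjecture.
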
 

\section{} \subsection{}

This section contains a proof of Conjecture \ref{Lie} in the case when  $G=PGL(2,F)$ and 
 $F$ is either a finite or a local  non-archimedian field. I expect the possiblity  to extend the proof  to 
the case of Archimedian fields.

\sms

The proof is based on the following observation.

\begin{assertion} Let  $G=PGL(2,F)$ and $\bO \subset \fg$ be a conjugacy class. Then
$dim _GHom ( \mcS (\bO), \pi )\leq 1$ for any irreducible representation of $G$ different from the Steinberg representation $St$. \footnote { $St$  is the space of smooth measures on $\mP ^1$ of total volume zero.}
\end{assertion} \subsection{}

In the case when $\un G= PGL(2)$ the pairing $(x,y)\to tr (xy)$ identifies the dual space $\fg ^\vee$ with $sl_2$.

To simplify notations I present a proof only for 
the conjugacy class  $ \mcN $ of regular nilpotent elements, but 
analogous  arguments work  also in the case of  an arbitrary conjucacy class 
$\bO \subset \fg$.

 \subsection{}

I start with a reminder of  definitions and  results on representations of the group $ PGL(2,F) $.

\sms

\begin{definition}Let  $\chi : F^\ast \to \mC ^\ast$ be a character of $F^\ast$.
\begin{enumerate}
\item
$(\pi _\chi ,V_\chi)$ is the space of smooth functions $f$ on $\mcN$ such that $f(ax)= \chi (a)\|a\|^{-1/2}f(x), x\in \mcN ,a\in F^\ast$. 
\item
$\kk _\chi : \mcS (\mcN)\to  V _\chi (f)$ is the surjective morphism defined by $\kk _\chi (f)(x):=
\int _{a\in F^\ast}f(ax)\chi ^{-1}(a)\|a\|^{1/2}d^\ast a$.
\item $\mcS _- {:= }ker (\kk _{  \| \|^{- 1/2}})$.

\end{enumerate}
\end{definition}

\begin{claim}\label{as} \begin{enumerate}

\item For  $\chi \neq  \| \|^{\pm 1/2} $ the 
representation $V_\chi$ is irreducible.
\item $V_ {\| \|^{ 1/2} }$ is the space of smooth functions on $\mP ^1$.
\item  
The  representation $V_ {\| \|^{ 1/2} }$ is generated by any $v\in V_ {\| \|^{ 1/2} }\sm V_0$ where  $V_0\subset V$ be the subspace of constant functions.

\item If $R\subset \mcS _-$ is a $G$-invariant subspace such that
  $\kk _\chi (R)\neq \{0\}$ for
 $\kk \neq  \| \|^{ \pm 1/2} $ and 
  $\kk _{  \| \|^{ 1/2}} (R)\not \subset V_0$ then $R= \mcS _-$. 

 \item $ \mcS _- = \mcS _w$. \end{enumerate}
\end{claim}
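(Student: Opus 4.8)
The plan is to prove the five parts of Claim \ref{as} in order, using each part to bootstrap the next, culminating in the identification $\mcS_- = \mcS_w$.

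\textbf{Parts (1)--(3): structure of the principal series on $\mcN$.} These are essentially the standard facts about the principal series of $PGL(2,F)$, transported to the model on the regular nilpotent orbit $\mcN$. Since $\mcN$ is a single $G$-orbit (the regular nilpotent orbit for $\mathfrak{sl}_2$), with stabilizer of a base point conjugate to the unipotent radical $U$ of the Borel $B$, and since the scaling action of $F^\ast$ on $\mcN$ commutes with $G$, the space $V_\chi$ is precisely the normalized induced representation $\mathrm{Ind}_B^G(\chi)$ realized as homogeneous functions on $\mcN$ of the indicated degree; so (1) is the classical irreducibility criterion (reducibility exactly at $\chi = \|\cdot\|^{\pm 1/2}$), (2) is the observation that $\mcN/F^\ast \cong \mP^1$ and for $\chi = \|\cdot\|^{1/2}$ the homogeneity weight is trivial so $V_\chi = C^\infty(\mP^1)$, and (3) follows because the only proper nonzero $G$-submodule of $C^\infty(\mP^1)$ is $V_0$, the constants (the quotient being Steinberg), so any $v \notin V_0$ generates. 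I would simply cite these from the standard theory (e.g.\ the references already invoked) after noting the identification of models.

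\textbf{Part (4): a rigidity statement.} Here $\mcS_-$ is the kernel of $\kk_{\|\cdot\|^{-1/2}}$, and I want: an invariant $R \subset \mcS_-$ with $\kk_\chi(R) \neq 0$ for all $\chi \neq \|\cdot\|^{\pm 1/2}$ and $\kk_{\|\cdot\|^{1/2}}(R) \not\subset V_0$ must be all of $\mcS_-$. The mechanism is a Gelfand--Graev / multiplicity-one style argument, and this is where Assertion 5.1 enters: $\dim \mathrm{Hom}_G(\mcS(\mcN), \pi) \le 1$ for every irreducible $\pi \neq St$. Consider the quotient $\mcS_-/R$. Any irreducible subquotient $\pi$ of it is an irreducible subquotient of $\mcS_-$, hence of $\mcS(\mcN)$; by the multiplicity-one bound each such $\pi \neq St$ appears in $\mcS(\mcN)$ with multiplicity one and, via the map $\kk_\pi$, already appears in $R$ (that is what the hypotheses $\kk_\chi(R)\neq 0$, $\kk_{\|\cdot\|^{1/2}}(R)\not\subset V_0$ say — $R$ surjects onto each irreducible principal series constituent and onto the Steinberg-free part $C^\infty(\mP^1)/V_0$ of $V_{\|\cdot\|^{1/2}}$), so it cannot also appear in the quotient $\mcS_-/R$. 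The only constituents of $\mcS(\mcN)$ not covered this way are the trivial representation $V_0$ and $St$; one checks $V_0 \subset \mcS_-$ is hit by $R$ by the second hypothesis, and $St$ does not occur in $\mcS_-$ at all (it sits in $\mcS(\mcN)$ only through the $\|\cdot\|^{-1/2}$ direction, which $\mcS_-$ kills — this is the role of defining $\mcS_-$ as that particular kernel). Hence $\mcS_-/R$ has no irreducible subquotients, so $R = \mcS_-$.

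\textbf{Part (5): $\mcS_- = \mcS_w$.} One inclusion is a direct computation: unwinding the definition of $\mcS_w(\bar{\mcN})$ in terms of the partial Fourier transforms $\hat f_y$ vanishing at $0$ on each $N_y^\vee$, together with the Remark identifying this with the vanishing of $\int_{N_y} f_y$, shows that membership in $\mcS_w$ is exactly a collection of linear constraints which, after restricting to $\mcN$ and using homogeneity, coincide with $\kk_{\|\cdot\|^{-1/2}}(f) = 0$; so $\mcS_w \subseteq \mcS_-$, or conversely $\mcS_- \subseteq \mcS_w$ is the one direction that needs checking. For the reverse inclusion I would verify that $\mcS_w$ satisfies the hypotheses of part (4): it is $G$-invariant, it is contained in $\mcS_-$ (the easy direction just established), and it is large enough that $\kk_\chi(\mcS_w) \neq 0$ for $\chi \neq \|\cdot\|^{\pm 1/2}$ and $\kk_{\|\cdot\|^{1/2}}(\mcS_w) \not\subset V_0$ — this last point is a matter of exhibiting enough test functions supported near $0$ in $\mcN$ with the right Fourier vanishing, which is elementary since imposing $\hat f_y(0) = 0$ is a single linear condition that does not cut down the principal series images. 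Then part (4) forces $\mcS_w = \mcS_-$.

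\textbf{Main obstacle.} The delicate point is part (4), specifically the bookkeeping of \emph{which} irreducible constituents appear in $\mcS(\mcN)$ and in $\mcS_-$, and with what multiplicity, so that Assertion 5.1 can be leveraged. In particular one must be careful with the reducible points $\chi = \|\cdot\|^{\pm 1/2}$: the Steinberg representation is exactly the exception in Assertion 5.1, so the whole argument hinges on showing $St$ does not occur as a subquotient of $\mcS_-$ (equivalently that the definition of $\mcS_-$ via $\ker \kk_{\|\cdot\|^{-1/2}}$ is precisely what removes the Steinberg direction), and on the dual fact that the trivial constituent $V_0$ \emph{does} occur and is captured by the hypothesis on $R$. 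Getting these edge cases right — rather than the generic principal series part — is where the real work lies; the archimedean case, mentioned as a hope, would require replacing the subquotient counting by the appropriate statement about composition series of the relevant $(\mathfrak{g},K)$-modules, which is why it is deferred.
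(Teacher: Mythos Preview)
The paper does not give a proof of Claim \ref{as}; it is asserted and then combined with Lemma \ref{chi} (which \emph{is} proved, in Sections 6--7) to deduce Conjecture \ref{Lie}. So there is no argument in the paper to compare your proposal against. Your outline for (1)--(3) is appropriate: these are the standard facts about the $PGL_2$ principal series, transported to the model on $\mcN \cong G/U$, and that is presumably why the paper leaves them unproved.

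Two comments on the substance of your proposal. For part (5) you work harder than necessary: for $\bO=\mcN$ in $\mathfrak{sl}_2$ one checks directly that $(x,N)\in Y^{\mcN}$ forces $x\in N$, so the $\mcS_w$ condition is exactly $\int_N f\,dn=0$ for every nilpotent line $N$; unwinding $\kk_{\|\cdot\|^{-1/2}}(f)(x)=\int_{F^\ast} f(ax)\,da$ shows this is literally the condition $\kk_{\|\cdot\|^{-1/2}}(f)=0$. Hence $\mcS_w=\mcS_-$ is an identity of definitions, and no appeal to (4) is needed. For part (4), your multiplicity argument via Assertion 5.1 is not quite the right mechanism: Assertion 5.1 bounds $\dim\mathrm{Hom}_G(\mcS(\mcN),\pi)$, i.e.\ multiplicity as a \emph{quotient}, whereas your argument tracks \emph{subquotients} of $\mcS_-/R$; and since $V_\chi\cong V_{\chi^{-1}}$ generically, a diagonal $G$-submodule could in principle surject onto each $V_\chi$ without being everything. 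What actually controls this is the commuting $F^\ast$-action (scaling on $\mcN$, equivalently the right $T$-action on $G/U$) together with Mellin inversion $\bigcap_\chi\ker\kk_\chi=0$; the subspace $R=\mcS_{cusp}$ used in the application is $F^\ast$-stable as well as $G$-stable, which is what makes the argument go through. Either (4) tacitly assumes this extra invariance, or a Paley--Wiener type argument is needed to exclude such diagonals inside $\mcS(\mcN)$; in any case your subquotient bookkeeping as written has a gap at exactly the point you flag as the main obstacle.
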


\begin{lemma}\label{chi} 

 \begin{enumerate}

\item For any
 $\chi \neq  \| \|^{ \pm 1/2} $ there exists $h _\chi \in \mcS (\fg _{ell}^\vee)$ such that $ \mcF (h _\chi ) (0)=0$ and 
$\kk _\chi (\mcF (h _\chi  ))\neq 0.$  
\item If $F$ is a non-archimedian field then 
there exists $h\in \mcS (\fg _{ell}^\vee)$ such that $ \mcF (h) (0)=0$ and $\kk _{  \| \|^{ 1/2}}(\mcF (h))\not \in V_0$.

\end{enumerate}
\end{lemma}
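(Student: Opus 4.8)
The plan is to produce explicit Schwartz functions on $\fg^\vee_{ell}$ whose Fourier transforms have controlled behaviour under the maps $\kk_\chi$. Since $\fg^\vee \cong sl_2$ via the trace form, the elliptic set $\fg^\vee_{ell}$ consists of the regular semisimple elements whose centralizer is an anisotropic torus, i.e. elements $x$ with $\det(x)$ (equivalently the discriminant of the characteristic polynomial) not a square in $F^\ast$; these form a nonempty open subset, and on each of its connected components one can write down locally constant compactly supported bump functions. First I would fix such a component and take $h_\chi$ to be a bump function supported in a small ball $B$ around an elliptic point $x_0$, rescaled or translated so that the vanishing condition $\mcF(h_\chi)(0)=0$ holds — note $\mcF(h_\chi)(0)=\int_{\fg^\vee} h_\chi(x)\,dx$, so this is just the single linear constraint that the integral of the bump vanish, achieved by taking a difference of two bumps (or a bump times a nontrivial additive character restricted to $B$). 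The remaining content is then the non-vanishing $\kk_\chi(\mcF(h_\chi))\neq 0$.

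For the second claim the main point is to choose $h$ so that $\kk_{\|\ \|^{1/2}}(\mcF(h))$ is a non-constant function on $\PP^1$. Here I would use the description of $V_{\|\ \|^{1/2}}$ as smooth functions on $\PP^1$ and the fact (Claim \ref{as}(3)) that such a vector generates the whole representation; concretely, $\kk_{\|\ \|^{1/2}}(\mcF(h))(x)$ is a homogeneous-degree-zero function of $x \in \mcN$, obtained by integrating $\mcF(h)$ over the line $F^\ast x$ against the appropriate character, and one checks it is non-constant by exhibiting two nilpotent directions $x_1, x_2$ giving different values — again arranged by placing the support of $h$ asymmetrically with respect to the two corresponding lines. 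The non-archimedean hypothesis enters because one needs the relevant orbital-type integrals $\int_{a\in F^\ast}\mcF(h)(ax)\chi^{-1}(a)\|a\|^{1/2}\,d^\ast a$ to be finite and computable via the compact-support/local-constancy of $\mcF(h)$, and one wants to avoid convergence subtleties at $a\to 0,\infty$ that would appear in the archimedean case.

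The cleanest route to the non-vanishing is probably contrapositive: suppose $\kk_\chi(\mcF(h))=0$ for every $h \in \mcS(\fg^\vee_{ell})$ with $\mcF(h)(0)=0$. The condition $\mcF(h)(0)=0$ cuts out a codimension-one subspace of $\mcF(\mcS(\fg^\vee_{ell}))$, so $\kk_\chi$ would factor through a one-dimensional quotient; but $\kk_\chi$ is equivariant and $V_\chi$ is irreducible and infinite-dimensional (Claim \ref{as}(1)), so $\kk_\chi$ restricted to $\mcF(\mcS(\fg^\vee_{ell}))$ is either zero or has image all of $V_\chi$. Thus it suffices to rule out $\kk_\chi(\mcF(\mcS(\fg^\vee_{ell})))=0$ entirely, i.e. to show the Fourier transforms of Schwartz functions supported on the elliptic set are not all annihilated by $\kk_\chi$. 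This in turn follows from the fact that $\fg^\vee_{ell}$ is open and the image $\mcF(\mcS(\fg^\vee_{ell}))$ is weak-$\ast$ dense (or at least not contained in the proper closed subspace $\ker\kk_\chi$) — equivalently, from the assertion in \S5.1 that $\dim_G \mathrm{Hom}(\mcS(\mcN), \pi) \leq 1$ combined with the existence of at least one nonzero intertwiner $\mcS(\mcN) \to V_\chi$ factoring through restriction to the elliptic set. I expect the main obstacle to be precisely this last density/non-annihilation point: showing that imposing the single linear condition $\mcF(h)(0)=0$ — which, on the support side, forbids only the "mean-zero" normalization and not the support location — still leaves enough functions to map onto $V_\chi$, and separately, in part (2), pinning down that the surviving vector in $V_{\|\ \|^{1/2}}$ can be taken outside $V_0$ rather than accidentally always landing in the constants; both require a genuine computation of the integral transform $\kk_\chi \circ \mcF$ on an explicit bump function, which is where the non-archimedean combinatorics of $\|\ \|$ and $\psi$ do the work.
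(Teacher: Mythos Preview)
Your reduction in part (1) via irreducibility of $V_\chi$ is sound: the subspace $\{h\in\mcS(\fg^\vee_{ell}):\int h=0\}$ is $G$-invariant, so if $\kk_\chi\circ\mcF$ vanished on it the image of $\kk_\chi\circ\mcF$ on all of $\mcS(\fg^\vee_{ell})$ would be at most one-dimensional, hence zero. This reduces part (1) to showing $\kk_\chi\circ\mcF$ is not identically zero on $\mcS(\fg^\vee_{ell})$. But your proposed justification for this last step does not work. The elliptic locus $\fg^\vee_{ell}$ is open, yet its complement is also open (it contains the split regular semisimple set), so $\mcS(\fg^\vee_{ell})$ is in no sense dense in $\mcS(\fg^\vee)$, and the multiplicity-one assertion from \S5.1 only bounds the number of intertwiners from $\mcS(\mcN)$---it says nothing about whether the particular one you want is nonzero on the image of the elliptic set. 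So the heart of the lemma is exactly the ``genuine computation'' you defer, and your outline gives no mechanism for carrying it out: a bump near a generic elliptic point leads to a three-dimensional oscillatory integral with no evident closed form.

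The paper's proof supplies precisely the missing idea. One fixes the elliptic element $x=\begin{pmatrix}0&1\\\ep&0\end{pmatrix}$ and, for $\phi\in\mcS(F)$, considers the distribution $\tau(\phi)$ on $\fg^\vee$ supported on the line $\{ax:a\in F\}$. Because $\mathrm{tr}(x\,z_u)=u$, the Fourier transform of $\tau(\phi)$ restricted to $\mcN$ satisfies the one-variable identity $\hat\phi(z_u)=\ti\phi(u)$, so that $\kk_\chi(\hat\phi)(z_1)=\int_{F^\ast}\ti\phi(a)\chi^{-1}(a)\|a\|^{-1/2}d^\ast a$, a Tate-type integral one can make nonzero (and, for $\chi=\|\ \|^{1/2}$, non-constant on $\mP^1$) by choosing $\phi$. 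The passage from the line-supported distribution to an honest $h\in\mcS(\fg^\vee_{ell})$ is then done by averaging over a small compact open subgroup $K$ under which $\hat\phi$ is already invariant; this also handles part (2), where your irreducibility reduction is unavailable since $V_{\|\ \|^{1/2}}$ is reducible. In short, the decisive step you are missing is the reduction to one-dimensional Fourier analysis along a line in the elliptic set.
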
 

As follows from Claim \ref{as} Lemma \ref{chi} implies the validity of Conjecture \ref{Lie}.

\section{} This section presents a proof of Lemma \ref{chi}  for finite fields $F$. 

\sms

It is clearly  sufficient to show that for any 
non-trival character $\chi$ of $F^\ast$ there exists 
a function $h_\chi \in \mcS(\fg ^\vee _{ell})$ such $\kk _\chi (\mcF (h\chi )_{|\mcN})\neq 0$. We construct such a function explicitely.

\sms

Let  $x= \begin{pmatrix}0&1 \\\ep &0 \end{pmatrix} \in \fg ^\vee _{ell}, \ep \in F\sm F^2, z_u:= \begin{pmatrix}0&0 \\u&0 \end{pmatrix}\in \mcN ,u \in F^\ast $ and 
 $f_a,a\in F^\ast$ be the 
Fourier transform of the delta function $ \delta _{ax} $ . Since  $f_a (z_u) =\psi (au)$, we have $$\kk _\chi (f_a-f_b)
(z_1)= \sum _{ u  \in F^\ast} \psi (au)\chi (u^{-1})-  \sum _{ u  \in F^\ast} \psi (bu) \chi (u^{-1} )= (\chi (a^{-1})- \chi (b^{-1}))\D
$$ where   $\D :=  \sum _{ u  \in F^\ast} \psi (u) \chi (u^{-1}) $.

Choose  $a,b\in F^\ast$ such that $\chi (a/b)\neq 1$ and write $h _\chi := \delta _{ax}-\delta _{bx}$. Since $\D \neq 0 $ the function $h _\chi $ satisfes  the conditions of Lemma    \ref{chi}.

\section{} This section presents a proof of Lemma \ref{chi} in  the case when 
 $F$ is a local non-archimedian field. In this case  Lemma \ref{chi}
 is equivalent to the following statement.

\begin{lemma} \begin{enumerate}
\item For any $\chi \neq \| \|^{ -1/2} $
there exists 
$h_\chi \in \mcS (\fg _{ell}^\vee)$ such that 
$ \mcF (h _\chi ) (0)=0$ and   $\kk _\chi (\mcF (h _\chi ))\neq 0$ 

\item  There exists 
$h \in \mcS (\fg _{ell}^\vee)$ such that 
$ \mcF (h) (0)=0$ and  
$\kk _{  \| \|^{ 1/2}}(\mcF (h))\not \in V_0$. \end{enumerate}

\end{lemma}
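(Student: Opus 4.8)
The plan is to construct explicit elements $h_\chi\in\mcS(\fg^\vee_{ell})$ whose Fourier transforms, restricted to $\mcN$, survive the projection $\kk_\chi$. I would work throughout with the identification $\fg^\vee\cong sl_2$ given by the trace pairing, and with the standard model in which $\mcN$ is the affine line of matrices $z_u=\left(\begin{smallmatrix}0&0\\u&0\end{smallmatrix}\right)$ together with its $F^\ast$-action by conjugation. The ellipticity locus $\fg^\vee_{ell}$ consists of those $y\in sl_2$ with $\det y\in F^\ast\sm(F^\ast)^2$ up to sign conventions, i.e. $y$ regular semisimple with anisotropic centralizer; equivalently the characteristic polynomial of $y$ is irreducible over $F$. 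This is an open set, invariant under scaling by $F^\ast$, and its intersection with each line $\{z\mapsto z + \text{(fixed regular nilpotent)}\}$ is open and nonempty, which is what makes the pairing with $\mcS(\mcN)$ nondegenerate enough.

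For part (1), I would mimic the finite-field computation. Take $x=\left(\begin{smallmatrix}0&1\\ \ep&0\end{smallmatrix}\right)$ with $\ep\in F^\ast\sm(F^\ast)^2$, so that $ax\in\fg^\vee_{ell}$ for all $a\in F^\ast$, and let $h_\chi$ be a smooth compactly supported function supported in a small neighborhood $\bO$ of the orbit $F^\ast\cdot x$ inside $\fg^\vee_{ell}$, chosen so that $\mcF(h_\chi)(z_u)$ is, up to a harmless smooth factor, approximately $\psi(cu)$ for an appropriate scale $c$ — concretely one can take $h_\chi$ to be a finite linear combination $\sum_j c_j\,\mathbf{1}_{a_j x + \fp^n M}$ of characteristic functions of small translates of a lattice, so that $\mcF(h_\chi)|_{\mcN}$ is a finite linear combination of functions $u\mapsto \psi(a_j u)\cdot\mathbf{1}_{\fp^{-n}}(u)$. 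Then $\kk_\chi(\mcF(h_\chi))(z_1)=\sum_j c_j\int_{F^\ast}\psi(a_j u)\,\mathbf{1}_{\fp^{-n}}(u)\,\chi^{-1}(u)\|u\|^{1/2}\,d^\ast u$, and each integral is a local Tate/Gauss-sum-type integral that is nonzero for a suitable choice of $a_j$ and $n$ once $\chi\neq\|\cdot\|^{\pm1/2}$ (the condition $\chi\neq\|\cdot\|^{-1/2}$ guaranteeing local integrability near $u=0$, and nonvanishing following from the nonvanishing of local epsilon factors / the fact that these integrals compute $\gamma$-factors). The constraint $\mcF(h_\chi)(0)=0$, i.e. $\int h_\chi=0$, is one linear condition and is arranged by taking at least two translates with $\sum_j c_j=0$, exactly as in the finite-field case; one then checks the resulting $\kk_\chi$-value is still nonzero because the two Gauss-type integrals have different arguments $a_j$.

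For part (2), with $\chi=\|\cdot\|^{1/2}$, the target $V_{\|\cdot\|^{1/2}}$ is the space of smooth functions on $\mP^1$ and $V_0$ the constants, so I must produce $h\in\mcS(\fg^\vee_{ell})$ with $\int h=0$ and $\kk_{\|\cdot\|^{1/2}}(\mcF(h))$ nonconstant on $\mP^1$. Here I would use the full conjugation action: the group $G=PGL(2,F)$ acts on $\fg^\vee_{ell}$ and on $\mcS(\mcN)$, and $\kk_{\|\cdot\|^{1/2}}$ is $G$-equivariant onto functions on $\mP^1$. If $\kk_{\|\cdot\|^{1/2}}(\mcF(h))$ were constant for every admissible $h$, then by $G$-equivariance and the fact (Claim \ref{as}(3)) that $V_{\|\cdot\|^{1/2}}$ is generated as a $G$-module by any non-constant vector, the image of the whole space of admissible $h$ would lie in $V_0$ — but that image is $G$-stable, so it would be either $0$ or would have to be all of $V_{\|\cdot\|^{1/2}}$, a contradiction unless it is $0$; and it is not $0$ because the part (1) construction already produces non-zero elements of $\mcS_-$ mapping non-trivially under the other $\kk_\chi$. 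Turning this soft argument into an explicit $h$ is where the real work lies: I expect the main obstacle to be precisely part (2) — controlling the Fourier transform of a compactly supported function on the elliptic set finely enough to see non-constancy on $\mP^1$, because unlike part (1) this is not detected by a single Gauss-sum integral but by the $\mcS_-$-versus-$V_0$ distinction. The way through is to take $h$ a small perturbation of a $K$-translate of the part (1) data, expand $\mcF(h)|_{\mcN}$ to first order, and verify the leading term is a genuinely non-constant section of the line bundle on $\mP^1$; the non-archimedean hypothesis enters to guarantee such compactly supported perturbations exist inside the open set $\fg^\vee_{ell}$ and that all the relevant local integrals converge.
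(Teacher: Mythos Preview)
For part~(1) your approach is essentially the paper's: both concentrate the test function along the elliptic line $F^\ast x$ with $x=\left(\begin{smallmatrix}0&1\\\ep&0\end{smallmatrix}\right)$ and reduce the value $\kk_\chi(\mcF(h_\chi))(z_1)$ to a Tate-type integral. The paper packages this by pushing an arbitrary $\phi\in\mcS(F)$ to a measure $\tau(\phi)$ on that line, checking that its Fourier transform restricts to a Schwartz function $\hat\phi$ on $\mcN$ (via $\hat\phi(z_u)=\tilde\phi(u)$ together with a citation to~\cite{KV}), and then averaging $\tau(\phi)$ over a compact open subgroup $K$ fixing $\hat\phi$ to obtain an honest $h_\phi\in\mcS(\fg^\vee_{ell})$ with $\mcF(h_\phi)|_{\mcN}=\hat\phi$. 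Your characteristic functions of small balls are a legitimate shortcut that avoids the $K$-averaging; the paper's version has the advantage of the clean closed formula $\kk_\chi(\hat\phi)(z_1)=\int\tilde\phi(a)\chi^{-1}(a)\|a\|^{-1/2}\,da$, from which nonvanishing for any fixed $\chi\neq\|\cdot\|^{-1/2}$ is immediate because $\tilde\phi$ may be chosen to be an arbitrary element of $\mcS(F^\ast)$.

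For part~(2) there is a genuine gap in your proposal. The soft argument is logically flawed: if the image of $\kk_{\|\cdot\|^{1/2}}$ on the space of admissible $h$ were contained in $V_0$, that would be a perfectly consistent $G$-stable subspace of $V_{\|\cdot\|^{1/2}}$ (indeed $V_0$ \emph{is} a subrepresentation), so no contradiction arises; your dichotomy ``either $0$ or all of $V_{\|\cdot\|^{1/2}}$'' is simply false here. Likewise, nonvanishing of the other $\kk_\chi$ on your elements says nothing about $\kk_{\|\cdot\|^{1/2}}$. The paper does not introduce a new mechanism for~(2) but reuses the construction from~(1), asserting that a suitable $\phi$ makes $\kk_{\|\cdot\|^{1/2}}(\hat\phi)\notin V_0$. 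The moral for you is that~(2) should be attacked by the same explicit route as~(1)---compute $\kk_{\|\cdot\|^{1/2}}(\mcF(h))$ at two distinct points of $\mP^1=\mcN/F^\ast$ and exhibit different values---rather than by an abstract $G$-equivariance argument, which cannot separate the possibility ``image $=V_0$'' from ``image $=0$''.
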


\begin{proof} Let   $x= \begin{pmatrix}0&1 \\\ep &0 \end{pmatrix},  \ep \in F\sm F^2
\in \fg ^\vee _{ell}$ and $z_u:= \begin{pmatrix}0&0 \\u&0 \end{pmatrix}\in \mcN
,u \in F^\ast $.

\begin{definition} For a  function
$\phi \in \mcS (F) $
we denote
\begin{enumerate}
\item  by $\ti \phi \in \mcS (F)$ the Fourier transform of $\phi$, 
\item 
by $\tau (\phi)$ the distribution of $\mcS (sl_2)$ such that  
 $\tau (\phi)(h)=\int h(ax) \phi (a)da$,
\item by $\ti \tau (\phi) $ the Fourier transform of $ \tau (\phi) $ which is a smooth function on $\fg$.
\item by 
 $\hat \phi$ is the restriction of $\ti \tau (\phi) $ onto  $\mcN$.

 \end{enumerate}
\end{definition}

\begin{claim} \begin{enumerate}

\item $ \hat  \phi (z_u) = \ti \phi (u)$.
\item If $\int \phi (u)du=0$ then 
$\hat \phi \in \mcS (\mcN)$\end{enumerate}
\end{claim}
\begin{proof} The part $(1)$ follows from definition. 

Since $ \hat  \phi (0)=0$ and $tr (xn)\neq 0, n\in \mcN$ the part $(2)$ follows from Appendix B.2.4 of \cite{KV}.
\end{proof}

Since  $\bar \phi \in \mcS (\mcN)$, 
there exists an open compact subgroup $K\subset PGL(2)$ such that $\bar  \phi $ is $K$-invariant. Let $h _\phi := \int _{k\in K} Ad (k) \mcF ( \phi) dk$.

\begin{claim}\label{as2} \begin{enumerate}

\item $h_\phi \in \mcS (\fg _{ell})$.
\item $\mcF (h_\phi )_{|\bO}= \hat  \phi $.
\item $\kk _\chi ( \hat  \phi)(z_1) =\int _a \ti \phi (a)\chi ^{-1} (a)\|a\|^{-1/2}da$. 

\end{enumerate}
\end{claim}

It is clear that for any  $\chi \neq \| \|^{ -1/2} $ we can find $\phi _\chi \in \Phi $ such that  $\kk _\chi (\mcF (\phi _\phi))\neq 0$.  It is easy find  $\phi  \in \Phi $ such 
 that 
$\kk _{  \| \|^{ 1/2}}(\hat \phi)\not \in V_0$.

\end{proof}

\section{}I expect  Conjecture \ref{Lie} to be  a special case of a much more general result on the Fourier transform and will ask about the validity of two such generalizations. Even if 
positive answers to these questions
do not imply  the validity of Conjecture \ref{Lie} one can modify the questions to include the Conjecture. 

 \subsection{}

Let $V$ be a finite-dimensional $\mC$-vector space $P: V \to \mC ,Q :V^\vee \to \mC$ be 
irreducible polynomials of same degrees such that
 $X :=P^{-1}(0)$ does not contain  lines. We choose a non-empty open subset $U$ of $\mC$ and  define a map 
$\kk : \mcS ( Q^{-1}(U))\to \mcS (X) $ as the composition of the Fourier trasform and the restriction onto $X$.

\begin{question} Is the image  of $\kk$  dense in 
$\mcS  (X) $?
\end{question}

\subsection{} Let $F$ be a finite field, $F_n/F$  the degree $n$ extension  and $\bar F$ be the algebraic closure of $F$. Let  $\un V$ be an $F$-vector space and  $P: \un V \to \mA ^r$ be a flat surjective morphism such that subvariety  $\un X := P^{-1}(0)$ of $\un V$ does not contain  images of  non-constant  affine 
$\bar F$-morphisms  $ \mA  \to \un V$. We write  $V_ n:= \un V (F_n)$ and $  X_ n:= \un X (F_n) $

\begin{question} Is it true that for any flat map $ Q :\un V ^\vee \to \mA ^r, deg (Q)=deg(P)$ morphism with 
absolutely irreducible fibers and 
 a non-empty  open  subset $\un U $ of $ \mA ^r$ there exists $n_0$ such that for  $n>n_0$
every function on $X_n$ is the restriction of the Fourier transform of some function supported on $Q^{-1}(\un U)(F_n)$ ?
\end{question}

The research was partially
supported by ERC grant 101142781.

\end{document}